\def\getdelim#1#2#3#4\relax{"#4}
\newtheorem{thm}{Theorem}[section]
\newtheorem{lem}[thm]{Lemma}
\theoremstyle{definition}
\newtheorem{defn}[thm]{Definition}
\newtheorem{ex}[thm]{Example}
\theoremstyle{remark} \numberwithin{equation}{section}
\begin{document}
\title[]{Sufficient Conditions for Existence of Positive Solutions for a Caputo Fractional Singular Boundary Value Problem}
\date{}
\author{Naseer Ahmad Asif}
\address{Department of Mathematics, School of Science, University of Management and Technology, C-II Johar Town, 54770 Lahore, Pakistan}%
\email{naseerasif@yahoo.com}%
\keywords{Positive solutions; Caputo derivative; Fractional order; Singular BVP, Mittag-Leffler}

\begin{abstract}
{We present sufficient conditions for the existence of positive solutions for a class of fractional singular boundary value problems in presence of Caputo fractional derivative. Further, the nonlinearity involved has singularity with respect to independent variable as well as with respect to dependent variable.}
\end{abstract}

\maketitle

\section{introduction}

Mathematical models involving fractional order derivatives offer better description of physical phenomena such as in mechanics, in control systems, fluid flow in porous media, signal and image processing, aerodynamics, electromagnetics, viscoelasticity \cite{bt,bbri,kt,luch}. Recently, the area of fractional order boundary value problems (BVPs) has achieved a great progress in respect of both theoretically and physical applications \cite{borai,tsc,dhelm,kilbas,lak4,miller,podlubny,maae}. Since most of the nonlinear fractional differential equations do not have exact analytic solution, therefore, results to establish existence of solutions have attracted attention of many researchers \cite{sz,yjx,zshl,zshl1,zshl2,zshz}.
However, few articles in literature have studied the existence of solution for singular BVPs of fractional order, see \cite{ars,mmz,ss,szhx,xjy,zmw}.


In this article, we establish criteria for positive existence of the following fractional order BVP
\begin{equation}\label{mp}\begin{split}
{}^{C}D_{0^{+}}^{\,\mu}x(t)+f(t,x(t))&=\omega\,x(t),\hspace{0.4cm}t\in(0,1),\hspace{0.4cm}1<\mu\leq2,\hspace{0.4cm}\omega>0,\\
x'(0)=0,\,\,x(1)&=0,
\end{split}\end{equation}
where ${}^{C}D_{0^{+}}^{\,\mu}$ Caputo fractional left derivative of order $\mu$,  $f:(0,1)\times(0,\infty)\rightarrow\mathbb{R}$ is continuous and singular at $t=0$, $t=1$ and $x=0$. We prove positive existence for BVP \eqref{mp} in the space $X:=\{x:x\in C[0,1],\,{}^{C}D_{0^{+}}^{\,\mu}\,x\in C(0,1)\}$. By positive solution $x$ of BVP \eqref{mp} we mean $x\in X$ satisfies BVP \eqref{mp} and $x(t)>0$ for $t\in[0,1)$.

The rest of the paper is organized as follows. In Section \ref{pre}, the definition of fractional derivative and some preliminaries lemmas are presented. In Section \ref{mr}, by the use of fixed-point theorem and results of functional analysis, the existence of positive solution has established. An example is presented to illustrate the main theorem.

\section{preliminaries}\label{pre}

\begin{defn} \cite{suzhang,zhang}
The Caputo fractional left derivative of a function $x\in AC^{n}[0,\infty)$ of order $\mu\in(n-1,n]$, $n\in\mathbb{N}$,  is
\begin{align*}{}^{C}D_{0^{+}}^{\mu}x(t)=\frac{1}{\Gamma(n-\mu)}\int_{0}^{t}\frac{x^{(n)}(\tau)}{(t-\tau)^{\mu-n+1}}d\tau.\end{align*}
\end{defn}

Further, the following Laplace transforms are essential for our work

\begin{equation}\label{lt}\begin{split}
\mathcal{L}\{({}^{C}D_{0^{+}}^{\,\mu}x)(t)\}(s)&=s^{\,\mu}\mathcal{L}\{x(t)\}(s)-\sum_{k=0}^{n-1}s^{\mu-k-1}x^{(k)}(0),\hspace{0.4cm}n-1<\mu\leq n,\\
\mathcal{L}\{t^{\nu-1}E_{\mu,\nu}(\omega t^{\mu})\}(s)&=\frac{s^{\mu-\nu}}{s^{\mu}-\omega},\hspace{0.4cm}\mu>0,\,\,\nu>0,\,\,\left|\frac{\omega}{s^{\mu}}\right|<1,
\end{split}\end{equation}

where $E_{\mu,\nu}(t):=\sum_{k=0}^{\infty}\frac{t^{k}}{\Gamma(\mu k+\nu)}$ is the modified Mittag-Leffler function.

\begin{lem}\label{lemir}
Let $y\in L[0,1]$, then the BVP
\begin{equation}\label{211}\begin{split}
{}^{C}D_{0^{+}}^{\,\mu}x(t)+y(t)&=\omega\,x(t),\hspace{0.4cm}t\in(0,1),\hspace{0.4cm}1<\mu\leq2,\hspace{0.4cm}\omega>0,\\
x'(0)=0,\,\,x(1)&=0,
\end{split}\end{equation}
has integral representation
\begin{equation}\label{xz}
x(t)=\int_{0}^{1}G(t,\tau)\,y(\tau)d\tau,\hspace{0.4cm}t\in[0,1],
\end{equation}
where
\begin{equation}\label{gf}
G(t,\tau)=\begin{cases}
\frac{E_{\mu,1}(\omega t^{\mu})}{E_{\mu,1}(\omega)}(1-\tau)^{\mu-1}E_{\mu,\mu}(\omega(1-\tau)^{\mu})-(t-\tau)^{\mu-1}E_{\mu,\mu}(\omega(t-\tau)^{\mu}),\,&0\leq\tau\leq t\leq1,\\
\frac{E_{\mu,1}(\omega t^{\mu})}{E_{\mu,1}(\omega)}(1-\tau)^{\mu-1}E_{\mu,\mu}(\omega(1-\tau)^{\mu}),\,&0\leq t\leq\tau\leq1.
\end{cases}\end{equation}
\end{lem}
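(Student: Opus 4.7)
The plan is to solve the linear fractional equation by Laplace transform, carrying $x(0)$ as an unknown constant, and then to pin it down from the boundary condition $x(1)=0$. Since $1<\mu\leq 2$, the sum in the first identity of \eqref{lt} ranges over $k=0,1$, and the condition $x'(0)=0$ immediately eliminates the $s^{\mu-2}$ term.

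First I would apply $\mathcal{L}$ to both sides of \eqref{211}. Writing $X(s)=\mathcal{L}\{x\}(s)$ and $Y(s)=\mathcal{L}\{y\}(s)$ and using \eqref{lt},
\begin{equation*}
s^{\mu} X(s) - s^{\mu-1} x(0) + Y(s) = \omega X(s),
\end{equation*}
whence
\begin{equation*}
X(s) = \frac{s^{\mu-1}}{s^{\mu}-\omega}\, x(0) - \frac{1}{s^{\mu}-\omega}\, Y(s).
\end{equation*}
The second identity of \eqref{lt} inverts the first fraction, with $\nu=1$, to $E_{\mu,1}(\omega t^{\mu})$ and, with $\nu=\mu$, to $t^{\mu-1}E_{\mu,\mu}(\omega t^{\mu})$. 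The convolution theorem then yields
\begin{equation*}
x(t) = x(0)\, E_{\mu,1}(\omega t^{\mu}) - \int_{0}^{t} (t-\tau)^{\mu-1} E_{\mu,\mu}(\omega(t-\tau)^{\mu})\, y(\tau)\, d\tau.
\end{equation*}

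Next I would impose $x(1)=0$. Since $\omega>0$, the series $E_{\mu,1}(\omega)=\sum_{k\geq 0}\omega^{k}/\Gamma(\mu k+1)$ converges to a strictly positive number, so it is safe to divide by it and obtain
\begin{equation*}
x(0) = \frac{1}{E_{\mu,1}(\omega)} \int_{0}^{1} (1-\tau)^{\mu-1} E_{\mu,\mu}(\omega(1-\tau)^{\mu})\, y(\tau)\, d\tau.
\end{equation*}
Substituting this expression back and combining the resulting two integrals by splitting $[0,1]=[0,t]\cup[t,1]$ produces exactly the representation \eqref{xz} with the piecewise Green's function \eqref{gf}.

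The only genuine subtlety I anticipate is verifying that the representation actually satisfies $x'(0)=0$ (the Laplace step only used it as a hypothesis, so we must confirm it holds a posteriori). Differentiating the series $E_{\mu,1}(\omega t^{\mu})=\sum_{k\geq 0}\omega^{k}t^{\mu k}/\Gamma(\mu k+1)$ termwise produces a factor $t^{\mu-1}$ which vanishes at $t=0$ since $\mu>1$, and differentiating the convolution integral $\int_{0}^{t}(t-\tau)^{\mu-1}E_{\mu,\mu}(\omega(t-\tau)^{\mu})y(\tau)\,d\tau$ gives a term of order $t^{\mu-1}$ near zero, vanishing at $t=0$ for any $y\in L[0,1]$. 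A parallel direct check applies ${}^{C}D_{0^{+}}^{\,\mu}$ to the candidate and uses the characteristic property $({}^{C}D_{0^{+}}^{\,\mu}-\omega)[E_{\mu,1}(\omega t^{\mu})]=0$ together with the analogous identity for the convolution kernel $t^{\mu-1}E_{\mu,\mu}(\omega t^{\mu})$ to recover $-y(t)$; this alternative route also sidesteps technicalities about the admissible class for the Laplace transform.
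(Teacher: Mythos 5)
Your proposal is correct and follows essentially the same route as the paper: Laplace transform of the equation, inversion via the two Mittag--Leffler transform pairs in \eqref{lt}, determination of $x(0)$ from $x(1)=0$, and splitting the integral over $[0,t]\cup[t,1]$ to assemble $G$. The only differences are cosmetic --- the paper first extends $y$ by zero to $(0,\infty)$ before transforming and keeps the $x'(0)$ term until the boundary conditions are imposed, while you drop it immediately and add an a posteriori check of $x'(0)=0$ that the paper omits.
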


\begin{proof}
Consider the extended differential equation
\begin{equation}\label{exd}
{}^{C}D_{0^{+}}^{\,\mu}x(t)+y_{*}(t)=\omega\,x(t),\hspace{0.4cm}t>0,\hspace{0.4cm}1<\mu\leq2,\hspace{0.4cm}\omega>0,
\end{equation}
where $y_{*}:(0,\infty)\rightarrow\mathbb{R}$ is defined as
\begin{align*}
y_{*}(t)=\begin{cases}
y(t),&0<t<1,\\
0,&t\geq 1.
\end{cases}
\end{align*}
Taking Laplace transform of \eqref{exd}, we have
\begin{align*}
\mathcal{L}\{{}^{C}D_{0^{+}}^{\,\mu}x(t)\}(s)+\mathcal{L}\{y_{*}(t)\}(s)=\omega \mathcal{L}\{x(t)\}(s)
\end{align*}
which in view of \eqref{lt}, leads to
\begin{align*}
\mathcal{L}\{x(t)\}(s)=\frac{s^{\mu-1}}{s^{\mu}-\omega}\,x(0)+\frac{s^{\mu-2}}{s^{\mu}-\omega}\,x'(0)-\frac{1}{s^{\mu}-\omega}\mathcal{L}\{y_{*}(t)\}(s)
\end{align*}
Taking inverse Laplace transform we have
\begin{align*}
x(t)=E_{\mu,1}(\omega t^{\mu})\,x(0)+tE_{\mu,2}(\omega t^{\mu})\,x'(0)-\int_{0}^{t}(t-\tau)^{\mu-1}E_{\mu,\mu}(\omega(t-\tau)^{\mu})y_{*}(\tau)d\tau,\hspace{0.4cm}t\geq0
\end{align*}
Now employing BCs \eqref{211}, we have
\begin{align*}
x(t)=\frac{E_{\mu,1}(\omega t^{\mu})}{E_{\mu,1}(\omega)}\int_{0}^{1}(1-\tau)^{\mu-1}E_{\mu,\mu}(\omega(1-\tau)^{\mu})y(\tau)d\tau-\int_{0}^{t}(t-\tau)^{\mu-1}E_{\mu,\mu}(\omega(t-\tau)^{\mu})y(\tau)d\tau,\hspace{0.4cm}t\in[0,1],
\end{align*}
which is equivalent to \eqref{xz}.
\end{proof}

\begin{lem}\label{gbound}
The Green's function \eqref{gf} satisfies
\begin{itemize}
\item[(1).] $G:[0,1]\times[0,1]\rightarrow[0,\infty)$ is continuous and is positive on $[0,1)\times[0,1)$;
\item[(2).] $G(t,\tau)\leq E_{\mu,\mu}(\omega)$, for $(t,\tau)\in[0,1]\times[0,1]$; and
\item[(3).] $\int_{0}^{1}G(t,\tau)d\tau=\frac{\sigma_{\mu,\omega}(t)}{\omega E_{\mu,1}(\omega)}$, for $t\in[0,1]$, $\sigma_{\mu,\omega}(t):=E_{\mu,1}(\omega t^{\mu})E_{\mu,\mu+1}(\omega)-t^{\mu}E_{\mu,1}(\omega)E_{\mu,\mu+1}(\omega t^{\mu})$.
\end{itemize}
\end{lem}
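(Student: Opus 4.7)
The plan is to dispatch the three claims separately, proceeding from the routine to the delicate. For (2), I would exploit that $E_{\mu,1}$ and $E_{\mu,\mu}$ are entire with non-negative Taylor coefficients, and hence non-decreasing on $[0,\infty)$. In the first branch of \eqref{gf} the subtracted term is non-negative, so dropping it yields $G(t,\tau)\leq \frac{E_{\mu,1}(\omega t^\mu)}{E_{\mu,1}(\omega)}(1-\tau)^{\mu-1} E_{\mu,\mu}(\omega(1-\tau)^\mu)$, and the bounds $E_{\mu,1}(\omega t^\mu)\leq E_{\mu,1}(\omega)$, $(1-\tau)^{\mu-1}\leq 1$, and $E_{\mu,\mu}(\omega(1-\tau)^\mu)\leq E_{\mu,\mu}(\omega)$ together give the majorant $E_{\mu,\mu}(\omega)$. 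The second branch equals this same majorant, so the upper bound holds throughout $[0,1]\times[0,1]$.

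For (3), I would split $\int_{0}^{1} G(t,\tau)\,d\tau$ at $\tau=t$ so that the ``outer'' term $\frac{E_{\mu,1}(\omega t^\mu)}{E_{\mu,1}(\omega)}(1-\tau)^{\mu-1} E_{\mu,\mu}(\omega(1-\tau)^\mu)$, common to the two branches, integrates over all of $[0,1]$ while the subtracted ``inner'' term contributes only on $[0,t]$. Substituting $u=1-\tau$ and integrating the defining series of $E_{\mu,\mu}$ term by term gives $\int_{0}^{1} u^{\mu-1} E_{\mu,\mu}(\omega u^\mu)\,du = \sum_{k\geq 0}\omega^{k}/\Gamma(\mu k+\mu+1) = E_{\mu,\mu+1}(\omega)$, using $\Gamma(\mu k+\mu+1)=(\mu k+\mu)\Gamma(\mu k+\mu)$. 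The parallel substitution $u=t-\tau$ yields $\int_{0}^{t}(t-\tau)^{\mu-1} E_{\mu,\mu}(\omega(t-\tau)^\mu)\,d\tau = t^{\mu} E_{\mu,\mu+1}(\omega t^\mu)$. Combining the two pieces over the common denominator $E_{\mu,1}(\omega)$ produces the stated closed-form expression in terms of $\sigma_{\mu,\omega}(t)$.

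For (1), continuity reduces to checking agreement of the two branches of \eqref{gf} along the diagonal $\{t=\tau\}$: the subtracted term $(t-\tau)^{\mu-1} E_{\mu,\mu}(\omega(t-\tau)^\mu)$ vanishes there because $\mu>1$, and each branch is otherwise a product of entire functions and a continuous power, hence continuous. Positivity on the upper triangle $\{0\leq t\leq \tau<1\}$ is immediate, since every factor in the relevant branch is strictly positive. The main obstacle I anticipate is strict positivity on the lower triangle $\{0\leq \tau\leq t<1\}$, where $G$ is a difference of two positive quantities. Fixing $\tau\in[0,1)$ and writing $g(t):=G(t,\tau)$ on $[\tau,1]$, one has $g(\tau)=\frac{E_{\mu,1}(\omega\tau^\mu)}{E_{\mu,1}(\omega)}(1-\tau)^{\mu-1} E_{\mu,\mu}(\omega(1-\tau)^\mu)>0$ and $g(1)=0$. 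My plan is to exclude interior zeros using the derivative identities $\frac{d}{dt} E_{\mu,1}(\omega t^\mu)=\omega t^{\mu-1} E_{\mu,\mu}(\omega t^\mu)$ and $\frac{d}{dt}\bigl((t-\tau)^{\mu-1} E_{\mu,\mu}(\omega(t-\tau)^\mu)\bigr)=(t-\tau)^{\mu-2} E_{\mu,\mu-1}(\omega(t-\tau)^\mu)$; the second blows up as $t\to\tau^+$ (when $\mu<2$), so $g$ starts sharply decreasing, and one aims to show this monotone descent persists all the way to $g(1)=0$. A potentially cleaner alternative would be a fractional maximum principle for ${}^{C}D_{0^{+}}^{\,\mu}-\omega I$ together with the boundary data of \eqref{211}: interpreting $G(\cdot,\tau)$ via \eqref{xz} as the response to a Dirac source at $\tau$, positivity of $G$ would then follow from positivity-preservation by the Green operator.
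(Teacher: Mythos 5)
Your treatments of (2) and (3) follow essentially the same route as the paper: for (2) the paper likewise drops the subtracted term and uses $(1-\tau)^{\mu-1}\leq 1$ together with the monotonicity of the Mittag--Leffler factors (you merely make explicit the bounds $E_{\mu,1}(\omega t^{\mu})\leq E_{\mu,1}(\omega)$ and $E_{\mu,\mu}(\omega(1-\tau)^{\mu})\leq E_{\mu,\mu}(\omega)$ that the paper leaves tacit); for (3) the paper performs the same substitution and termwise integration. One point you should not have glossed over in (3): your (correct) evaluations $\int_{0}^{1}u^{\mu-1}E_{\mu,\mu}(\omega u^{\mu})\,du=E_{\mu,\mu+1}(\omega)$ and $\int_{0}^{t}(t-\tau)^{\mu-1}E_{\mu,\mu}(\omega(t-\tau)^{\mu})\,d\tau=t^{\mu}E_{\mu,\mu+1}(\omega t^{\mu})$ combine to give $\sigma_{\mu,\omega}(t)/E_{\mu,1}(\omega)$, \emph{not} the stated $\sigma_{\mu,\omega}(t)/(\omega E_{\mu,1}(\omega))$; the two differ by a factor of $\omega$. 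Indeed $\int_{0}^{x}s^{\mu-1}E_{\mu,\mu}(\omega s^{\mu})\,ds=(E_{\mu,1}(\omega x^{\mu})-1)/\omega=x^{\mu}E_{\mu,\mu+1}(\omega x^{\mu})$, so the extra $1/\omega$ in the lemma (and in the paper's own computation) is spurious. Your calculation is the right one; asserting that it ``produces the stated closed-form expression'' hides a discrepancy you should have caught, and which affects the constants used later in $(A2)$ and in the example.

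For (1) there is a genuine gap. You have correctly isolated the only nontrivial claim --- strict positivity of $G(t,\tau)$ on the lower triangle $0\leq\tau\leq t<1$, where $G$ is a difference of two positive quantities --- but neither of your two plans is a proof. The observation that $g'(t)\to-\infty$ as $t\to\tau^{+}$ (for $\mu<2$) only controls $g$ near the left endpoint of $[\tau,1]$; the assertion that ``this monotone descent persists all the way to $g(1)=0$'' is precisely what must be established, and nothing in the sketch rules out $g$ dipping below zero in the interior and returning to $0$ at $t=1$. The maximum-principle alternative is likewise nontrivial here, because the zeroth-order term enters with the ``wrong'' sign (the equation is ${}^{C}D_{0^{+}}^{\,\mu}x-\omega x=-y$ with $\omega>0$), so positivity preservation by the Green operator is not automatic and would itself require an argument --- e.g.\ a disconjugacy-type result or a restriction on $\omega$. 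To be fair, the paper offers no proof of (1) at all (it writes only ``Clearly\dots''), so on this item you have gone further than the source in diagnosing where the difficulty lies; but the positivity claim remains unproved in both your proposal and the paper.
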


\begin{proof}
\begin{itemize}
\item[(1).] Clearly, Green's function $G(t,\tau)$ is continuous for $(t,\tau)\in[0,1]\times[0,1]$. Moreover, $G(t,\tau)>0$ for $(t,\tau)\in[0,1)\times[0,1)$. 
\item[(2).] For $\tau\in[0,1]$ we have $(1-\tau)^{\mu-1}\leq1$. Consequently, from \eqref{gf}, we have
\begin{align*}
G(t,\tau)\leq E_{\mu,\mu}(\omega),\text{ for }(t,\tau)\in[0,1]\times[0,1].
\end{align*}
\item[(3).] Integrating \eqref{gf} with respect to $\tau$ from $0$ to $1$, we have
\begin{align*}
\int_{0}^{1}G(t,\tau)d\tau&=\frac{E_{\mu,1}(\omega t^{\mu})}{E_{\mu,1}(\omega)}\int_{0}^{1}(1-\tau)^{\mu-1}E_{\mu,\mu}(\omega(1-\tau)^{\mu})d\tau-\int_{0}^{t}(t-\tau)^{\mu-1}E_{\mu,\mu}(\omega(t-\tau)^{\mu})d\tau\\
&=\frac{E_{\mu,1}(\omega t^{\mu})}{E_{\mu,1}(\omega)}\int_{0}^{1}\tau^{\mu-1}E_{\mu,\mu}(\omega\tau^{\mu})d\tau-\int_{0}^{t}\tau^{\mu-1}E_{\mu,\mu}(\omega\tau^{\mu})d\tau\\
&=\frac{E_{\mu,1}(\omega t^{\mu})E_{\mu,\mu+1}(\omega)-t^{\mu}E_{\mu,1}(\omega)E_{\mu,\mu+1}(\omega t^{\mu})}{\omega E_{\mu,1}(\omega)}=\frac{\sigma_{\mu,\omega}(t)}{\omega E_{\mu,1}(\omega)}
\end{align*}
\end{itemize}
\end{proof}

\section{main result}\label{mr}

Assume that
\begin{itemize}
\item[(A1).] There exist $q\in C(0,1)$, $u\in C(0,\infty)$ decreasing, and $v\in C[0,\infty)$ increasing such that
\begin{align*}|f(t,x)|\leq q(t)(u(x)+v(x)),\hspace{0.4cm}t\in(0,1),\hspace{0.4cm}x\in(0,\infty),\end{align*}
\begin{align*}\int_{0}^{1}q(t)dt<\infty,\text{ and }\int_{0}^{1}q(t)u\left(c\,\sigma_{\mu,\omega}(t)\right)dt<\infty\,\text{ for }c>0.
\end{align*}
\item[(A2).] There exist a constant $R>\frac{\gamma_{_{R}}\,E_{\mu,\mu+1}(\omega)}{\omega E_{\mu,1}(\omega)}$ such that, for $t\in(0,1)$ and $x\in(0,R]$, $f(t,x)\geq\gamma_{_{R}}$, where the parameter $\gamma_{r}$ is positive and decreasing for $r>0$. Moreover,
\begin{align*}\frac{R}{E_{\mu,\mu}(\omega)\,\chi_{_{R}}\left(1+\frac{q(R)}{p(R)}\right)}>1\end{align*}
where
\begin{align*}\chi_{r}=\int_{0}^{1}q(t)u\left(\frac{\gamma_{_{r}}\,\sigma_{\mu,\omega}(t)}{\omega E_{\mu,1}(\omega)}\right)dt.\end{align*}
\end{itemize}
In view of $(A2)$, choose $\varepsilon\in(0,R-\frac{\gamma_{_{R}}\,E_{\mu,\mu+1}(\omega)}{\omega E_{\mu,1}(\omega)}]$ such that
\begin{equation}\label{eps}
\frac{R-\varepsilon}{E_{\mu,\mu}(\omega)\,\chi_{_{R+\varepsilon}}\left(1+\frac{q(R+\varepsilon)}{p(R+\varepsilon)}\right)}\geq1
\end{equation}
For $m\in\mathbb{N}$ with $\frac{1}{m}<\varepsilon$, consider the modified BVP
\begin{equation}\label{sp}\begin{split}
{}^{C}D_{0^{+}}^{\,\mu}x(t)+f\left(t,\min\{\max\{x(t)+\frac{1}{m},\frac{1}{m}\},R\}\right)&=\omega x(t),\hspace{0.1cm}t\in(0,1),\hspace{0.1cm}1<\mu\leq2,\hspace{0.1cm}\omega>0,\\
x'(0)=0,\hspace{0.3cm}x(1)&=0,
\end{split}\end{equation}
which in view of Lemma \ref{lemir}, has integral representation
\begin{equation*}x(t)=\int_{0}^{1}G(t,\tau)f\left(\tau,\min\{\max\{x(\tau)+\frac{1}{m},\frac{1}{m}\},R\}\right)d\tau,\hspace{0.4cm}t\in[0,1].\end{equation*}
Define $T_{m}:C[0,1]\rightarrow C[0,1]$ by
\begin{equation}\label{mapt}T_{m}x(t)=\int_{0}^{1}G(t,\tau)f\left(\tau,\min\{\max\{x(\tau)+\frac{1}{m},\frac{1}{m}\},R\}\right)d\tau,\hspace{0.4cm}t\in[0,1].\end{equation}
Clearly, fixed points of $T_{m}$ are solutions of BVP \eqref{sp}.
\begin{thm}\label{mainth}
Assume that $(A1)$ and $(A2)$ hold. Then the BVP \eqref{mp} has a positive solution.
\end{thm}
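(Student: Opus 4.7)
The plan is to apply Schauder's fixed point theorem to $T_m$ for each $m \in \mathbb{N}$ with $1/m < \varepsilon$, obtaining a solution $x_m$ of the truncated BVP \eqref{sp}, and then extract a uniformly convergent subsequence whose limit is the desired positive solution of \eqref{mp}. For any $x \in C[0,1]$, the truncated argument $y_m(\tau) := \min\{\max\{x(\tau) + 1/m, 1/m\}, R\}$ lies in $[1/m, R]$, so the integrand in \eqref{mapt} is dominated by $E_{\mu,\mu}(\omega)\, q(\tau)(u(1/m) + v(R))$, which belongs to $L^1(0,1)$ by (A1). Continuity of $G$ and of $f$ on $(0,1) \times (0,\infty)$, together with Arzel\`a--Ascoli, then shows $T_m$ is completely continuous and maps a sufficiently large closed ball of $C[0,1]$ into itself, producing a fixed point $x_m$.

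The essential step is to establish uniform-in-$m$ a priori bounds on $x_m$. From (A2) one has $f(\tau, y_m(\tau)) \geq \gamma_R$ since $y_m \in (0, R]$, hence by Lemma \ref{gbound}(3),
\begin{equation*}
x_m(t) \;\geq\; \gamma_R \int_0^1 G(t,\tau)\,d\tau \;=\; \frac{\gamma_R\, \sigma_{\mu,\omega}(t)}{\omega E_{\mu,1}(\omega)}.
\end{equation*}
In particular $y_m(\tau) \geq \gamma_R \sigma_{\mu,\omega}(\tau)/(\omega E_{\mu,1}(\omega))$ wherever $y_m$ is not saturated at $R$. Combining this lower bound with (A1), monotonicity of $u$ and $v$, and Lemma \ref{gbound}(2), the $u$-part of $|f|$ is bounded by the integrand defining $\chi_R$; the quantitative hypothesis \eqref{eps} then forces $\|x_m\|_\infty \leq R - \varepsilon$, so that $x_m + 1/m \leq R$ throughout and the upper truncation never activates on the fixed point.

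Passage to the limit is the main technical hurdle. The dominator $q(\tau)\bigl(u\bigl(\gamma_R \sigma_{\mu,\omega}(\tau)/(\omega E_{\mu,1}(\omega))\bigr) + v(R)\bigr)$ is integrable precisely by the two conditions in (A1), and together with continuity of $G$ it yields equicontinuity of $\{x_m\}$. A subsequence $x_{m_k}$ converges uniformly to some $x \in C[0,1]$, and the lower bound survives to give $x(t) > 0$ on $[0,1)$. Since $x(\tau) > 0$ for $\tau \in (0,1)$, we have $y_{m_k}(\tau) \to x(\tau)$ and $f(\tau, y_{m_k}(\tau)) \to f(\tau, x(\tau))$ pointwise; dominated convergence then yields
\begin{equation*}
x(t) \;=\; \int_0^1 G(t,\tau)\, f(\tau, x(\tau))\, d\tau, \qquad t \in [0,1],
\end{equation*}
so by Lemma \ref{lemir}, $x$ is a positive solution of \eqref{mp} lying in $X$. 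The main obstacle is the double singularity of $f$ at $\tau \in \{0,1\}$ and at $x = 0$: near $\tau = 1$ the lower bound $\sigma_{\mu,\omega}(\tau)$ vanishes and so $u$ of it may blow up, and the hypothesis $\int_0^1 q(t)\, u(c\,\sigma_{\mu,\omega}(t))\,dt < \infty$ is exactly what is needed to control the $u$-term in the integrand and justify both equicontinuity and dominated convergence.
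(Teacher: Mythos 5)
Your proposal is correct and follows essentially the same route as the paper: Schauder's fixed point theorem applied to the truncated operator $T_{m}$, the lower bound from $(A2)$ and Lemma \ref{gbound}(3), the upper bound $\|x_{m}\|_{\infty}\leq R-\varepsilon$ from $(A1)$ and \eqref{eps} showing the truncation never activates, then Arzel\`a--Ascoli and dominated convergence to pass to the limit. If anything, you supply slightly more detail than the paper on why $T_{m}$ is completely continuous and on the role of the hypothesis $\int_{0}^{1}q(t)\,u(c\,\sigma_{\mu,\omega}(t))\,dt<\infty$ in the limit passage, but the argument is the same.
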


\begin{proof}
In view of $(A1)$ and Schauder's fixed point theorem the map $T_{m}$ defined by \eqref{mapt} has a fixed point $x_{m}\in C[0,1]$. Thus
\begin{equation}\label{fp}
x_{m}(t)=\int_{0}^{1}G(t,\tau)f\left(\tau,\min\{\max\{x(\tau)+\frac{1}{m},\frac{1}{m}\},R\}\right)d\tau,\hspace{0.4cm}t\in[0,1]
\end{equation}
which in view of $(A2)$ and Lemma \ref{gbound}, leads to
\begin{equation}\label{lb}
x_{m}(t)\geq\int_{0}^{1}G(t,\tau)\gamma_{_{R}}d\tau\geq\frac{\gamma_{_{R+\varepsilon}}\,\sigma_{\mu,\omega}(t)}{\omega E_{\mu,1}(\omega)}
\end{equation}
Also \eqref{fp} in view of Lemma \ref{gbound}, $(A1)$, \eqref{lb} and \eqref{eps}, leads to
\begin{equation}\label{ub}\begin{split}
x_{m}(t)&\leq E_{\mu,\mu}(\omega)\int_{0}^{1}q(\tau)u(\min\{\max\{x_{m}(\tau)+\frac{1}{m},\frac{1}{m}\},R\})\left(1+\frac{v(\min\{\max\{x_{m}(\tau)+\frac{1}{m},\frac{1}{m}\},R\})}{u(\min\{\max\{x_{m}(\tau)+\frac{1}{m},\frac{1}{m}\},R\})}\right)d\tau\\
&\leq E_{\mu,\mu}(\omega)\int_{0}^{1}q(\tau)u\left(\frac{\gamma_{_{R+\varepsilon}}\,\sigma_{\mu,\omega}(\tau)}{\omega E_{\mu,1}(\omega)}\right)\left(1+\frac{v(R+\varepsilon)}{u(R+\varepsilon)}\right)d\tau\\
&=E_{\mu,\mu}(\omega)\,\chi_{_{R+\varepsilon}}\left(1+\frac{v(R+\varepsilon)}{u(R+\varepsilon)}\right)\leq R-\varepsilon
\end{split}\end{equation}
Consequently, from \eqref{lb} and \eqref{ub}, solution $x_{m}$ of BVP \eqref{sp} satisfies
\begin{equation}\label{xnt}x_{m}(t)=\int_{0}^{1}G(t,\tau)f\left(\tau,x_{m}(\tau)+\frac{1}{m}\right)d\tau,\hspace{0.4cm}t\in[0,1]\end{equation}
and
\begin{align*}
\frac{\gamma_{_{R+\varepsilon}}\,\sigma_{\mu,\omega}(t)}{\omega E_{\mu,1}(\omega)}\leq x_{m}(t)<R,\hspace{0.4cm}t\in[0,1]
\end{align*}
which shows that the sequence $\{x_{n}\}_{n=m}^{\infty}$ is uniformly bounded on $[0,1]$. Moreover, since $G(t,\tau)$ is uniformly continuous on $[0,1]\times[0,1]$, by Lebesgue dominated convergence theorem, the sequence $\{x_{n}\}_{n=m}^{\infty}$ equicontinuous on $[0,1]$. Thus by Arzela Ascoli Theorem the sequence $\{x_{n}\}_{n=m}^{\infty}$ is relatively compact and consequently there exist a subsequence $\{x_{n_{k}}\}_{k=1}^{\infty}$ converging uniformly to $x\in C[0,1]$. Moreover, in view of \eqref{xnt}, we have
\begin{align*}
x_{n_{k}}(t)=\int_{0}^{1}G(t,\tau)f\left(\tau,x_{n_{k}}(\tau)+\frac{1}{n_{k}}\right)d\tau
\end{align*}
as $k\rightarrow\infty$, we obtain
\begin{equation}\label{intsol}
x(t)=\int_{0}^{1}G(t,\tau)f(\tau,x(\tau))d\tau
\end{equation}
which in view of Lemma \ref{lemir}, leads to
\begin{align*}
{}^{C}D_{0^{+}}^{\,\mu}\,x(t)+f(t,x(t))=\omega\,x(t),\hspace{0.4cm}x'(0)=0,\hspace{0.4cm}x(1)=0.
\end{align*}
Also, ${}^{C}D_{0^{+}}^{\,\mu}\,x\in C(0,1)$. Further, from \eqref{intsol} in view of $(A2)$ and Lemma \ref{gbound}, we have
\begin{align*}
x(t)=\int_{0}^{1}G(t,\tau)f(\tau,x(\tau))d\tau\geq\int_{0}^{1}G(t,\tau)\,\gamma_{_{R}}d\tau=\frac{\gamma_{_{R}}\,\sigma_{\mu,\omega}(t)}{\omega\,E_{\mu,1}(\omega)}
\end{align*}
which shows that $x(t)>0$ for $t\in[0,1)$. Hence $x\in X$ is a positive solution of BVP \eqref{mp}.
\end{proof}

\begin{ex}

\begin{equation}\label{ebvp}\begin{split}
{}^{C}D_{{0}^{+}}^{\,^{1.9}}\,x(t)+\frac{\lambda}{\sqrt{\sigma_{_{1.9,\,2.0}}(t)\,\,\sigma_{_{1.9,\,2.0}}(1-t)}}\left(\frac{1}{\sqrt[5]{x(t)}}-x(t)+R\right)&=2\,x(t),\hspace{0.4cm}t\in(0,1)\\
x'(0)=0,\,\,x(1)&=0
\end{split}\end{equation}
where
\begin{align*}
0<\lambda<\min\left\{\frac{R^{\frac{6}{5}}}{13.3352\,(1+2R^{\frac{6}{5}})^{\frac{5}{4}}},3.59596\times R^{\frac{6}{5}}\right\}
\end{align*}
Here
\begin{align*}
f(t,x)=\frac{\lambda}{\sqrt{\sigma_{_{1.9,\,2.0}}(t)\,\,\sigma_{_{1.9,\,2.0}}(1-t)}}\left(\frac{1}{\sqrt[5]{x}}-x+R\right)
\end{align*}
Choose
\begin{align*}
q(t)=\frac{\lambda}{\sqrt{\sigma_{_{1.9,\,2.0}}(t)\,\,\sigma_{_{1.9,\,2.0}}(1-t)}},\hspace{0.4cm}u(x)=\frac{1}{\sqrt[5]{x}},\hspace{0.4cm}v(x)=x+R,\hspace{0.4cm}\gamma_{r}=1.94308\times\frac{\lambda}{\sqrt[5]{r}}
\end{align*}
Then,
\begin{align*}
\int_{0}^{1}q(t)dt=3.07853\times\lambda,\hspace{0.4cm}\int_{0}^{1}q(t)u(c\,\,\sigma_{_{1.9,\,2.0}}(t))dt=4.37043\times\frac{\lambda}{\sqrt[5]{c}}
\end{align*}
Moreover,
\begin{align*}
|f(t,x)|\leq q(t)(u(x)+v(x)),\text{ for }t\in(0,1),\hspace{0.4cm}x\in(0,\infty),
\end{align*}
\begin{align*}
f(t,x)\geq \gamma_{_{R}}\text{ for }t\in(0,1),\hspace{0.4cm}x\in(0,R].
\end{align*}
Further,
\begin{align*}
\frac{R}{E_{_{1.9,1.9}}(2)\,\chi_{_{R}}\left(1+\frac{v(R)}{u(R)}\right)}=\frac{R}{7.94329\times\sqrt[25]{R}\,\left(1+2R^{\frac{6}{5}}\right)\lambda^{\frac{4}{5}}}>1
\end{align*}
where
\begin{align*}
\chi_{r}=5.21001\times\lambda^{\frac{4}{5}}\times\sqrt[25]{r}
\end{align*}
Clearly, the assumptions $(A1)$ and $(A2)$ of Theorem \ref{mainth} are satisfied, therefore, the BVP \eqref{ebvp} has a positive solution $x\in X$.

\end{ex}

\end{document}